\newcommand{\abs}[1]{\left\vert#1\right\vert}
\newcommand{\set}[1]{\left\{#1\right\}}
\newcommand{\eps}{\varepsilon}
\newcommand{\CC}{\mathcal{C}}
\newcommand{\CT}{\mathcal{T}}
\newcommand{\CS}{\mathcal{S}}
\renewcommand{\emptyset}{\varnothing}
\newcommand{\sd}{\bigtriangleup}
\renewcommand{\epsilon}{\varepsilon}
\renewcommand{\geq}{\geqslant}
\newtheorem{thm}{Theorem}[section]
\newtheorem*{thm*}{Theorem}
\newtheorem{lem}[thm]{Lemma}
\newtheorem*{lem*}{Lemma}
\newtheorem{cor}[thm]{Corollary}
\newtheorem*{cor*}{Corollary}
\newtheorem*{prop*}{Proposition}
\theoremstyle{definition}
\newtheorem{defn}[thm]{Definition}
\newtheorem*{defn*}{Definition}
\theoremstyle{remark}
\newtheorem*{rem*}{Remark}
\newtheorem*{example*}{Example}
\newtheorem*{que*}{Question}
\title{Dynamical quasitilings of amenable groups}
\author{Tomasz Downarowicz, Dawid Huczek}
\affil{\textit{Faculty of Pure and Applied Mathematics,
        Wroc{\l}aw University of Science and Technology, 
        Wybrze\.{z}e Wyspia\'{n}skiego 27,
        50-370 Wroc{\l}aw, Poland}\\
    \texttt{Tomasz.Downarowicz@pwr.edu.pl, Dawid.Huczek@pwr.edu.pl}}
\begin{document}

\maketitle

\begin{abstract}
We prove that for any compact zero-dimensional metric space $X$ on which an infinite countable amenable group $G$ acts freely by homeomorphisms, there exists a dynamical quasitiling with good covering, continuity, F\o lner and dynamical properties, i.e to every $x\in X$ we can assign a quasitiling $\mathcal{T}_x$ of $G$ (with all the $\mathcal{T}_x$ using the same, finite set of shapes) such that the tiles of $\mathcal{T}_x$ are disjoint, their union has arbitrarily high lower Banach Density, all the shapes of $\mathcal{T}_x$ are large subsets of an arbitrarily large F\o lner set, and if we consider $\mathcal{T}_x$ to be an element of a shift space over a certain finite alphabet, then the mapping $x\mapsto \mathcal{T}_x$ is a factor map.
\end{abstract}

\section{Introduction}
Many constructions in symbolic and zero-dimensional dynamics with the action of $\mathbb{Z}$ rely on partitioning a sequence of symbols into disjoint blocks (i.e. partitioning $\mathbb{Z}$ into disjoint intervals) and performing some operations on such blocks. Analogous constructions in systems with the action of an arbitrary amenable group $G$ require partitioning $G$ into disjoint, finite subsets with good invariance properties --- in other words, constructing an appropriate (quasi-)tiling of $G$. Such quasitilings, in which tiles are ,,almost'' disjoint and whose union is ,,almost'' all of $G$ were first developed by Ornstein and Weiss in \cite{OW}, and have later been improved to tilings in \cite{DHZ}. 

The (quasi-)tilings of \cite{OW} and \cite{DHZ} are algebraic in nature and their construction does not rely on any ,,external'' dynamics. However, proofs in traditional symbolic and zero-dimensional dynamics often require that the partitioning of symbolic sequences into blocks be continuous and consistent with the shift actions, i.e. that two sequences that agree on a long interval should also be identically partitioned over this (or slightly shorter) interval, and that shifting a sequence should result in shifting the corresponding partition. In the present paper we prove that given a free action of a countable, discrete, amenable group $G$ on a zero-dimensional compact metric space $X$, it is possible to construct a quasitiling $\CT_x$ for every $x\in X$ such that all such quasitilings have arbitrarily good invariance and covering properties, and if we view $\CT_x$ as an element of an appropriate shift space, then the mapping $x\mapsto \CT_x$ is a factor map.
A careful reader will notice that large portions of this note are copied from \cite{DHZ}. We do so in order to make this paper self-contained.

\section{Preliminaries}
\subsection{Basic notions}
Throughout this paper $G$ denotes an infinite countable \emph{amenable group}, i.e., a group in which there exists a sequence of finite sets $F_n\subset G$ (called a \emph{F\o lner sequence}, or a \emph{sequence of F\o lner sets}), such that for any $g\in G$ we have
\[
\lim_{n\to\infty}\frac{\abs{gF_n\sd F_n}}{\abs{F_n}}= 0,
\]
where $gF=\set{gf:f\in F}$, $\abs{\cdot}$ denotes the cardinality of a set, and $\sd$ is the symmetric difference. Since $G$ is infinite, the sequence $|F_n|$ tends to infinity. Without loss of generality (see \cite[Corollary 5.3]{N}) we can assume that the sets in the F\o lner sequence are symmetric (i.e. $F_n^{-1}=F_n$ for every $n$) and contain the unit.

\begin{defn}
If $T$ and $K$ are nonempty, finite subsets of $G$ and $\eps<1$, we say that $T$ is \emph{$(K,\eps)$-invariant} if
\[\frac{\abs{KT\sd T}}{\abs{T}}< \eps,\]
where $KT=\set{gh : g \in K,h\in T}$.
\end{defn}

Observe that if $K$ contains the unit of $G$, then $(K,\eps)$-invariance is equivalent to the simpler condition
\[\abs{KT}< (1+\eps)\abs{T}.\]

The following facts is are not difficult to see, so we skip the proofs, referring the reader to \cite{DHZ}.
\begin{lem}\label{folner}
A sequence of finite sets $(F_n)$ is a F\o lner sequence if and only if for every finite set $K$ and every $\eps>0$ the sets $F_n$ are eventually $(K,\eps)$-invariant.
\end{lem}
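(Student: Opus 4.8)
The plan is to prove the two implications separately, observing that the only real content of the lemma is to reconcile the single-translate formulation of the F\o lner condition with the finite-set formulation of $(K,\eps)$-invariance.

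For the ``only if'' direction (F\o lner implies eventual invariance), I would fix a finite set $K=\set{g_1,\dots,g_k}$ and $\eps>0$. The key observation is the set-theoretic inclusion
\[
KF_n\sd F_n\subseteq\bigcup_{i=1}^{k}\bigl(g_iF_n\sd F_n\bigr),
\]
which I would verify by chasing an element: a point of $KF_n\setminus F_n$ lies in $g_iF_n$ for some $i$ and hence in $g_iF_n\setminus F_n$, while a point of $F_n\setminus KF_n$ lies outside \emph{every} translate $g_iF_n$ and hence already lies in $F_n\setminus g_1F_n$. Passing to cardinalities and dividing by $\abs{F_n}$ yields
\[
\frac{\abs{KF_n\sd F_n}}{\abs{F_n}}\leq\sum_{i=1}^{k}\frac{\abs{g_iF_n\sd F_n}}{\abs{F_n}}.
\]
Because $K$ is finite, this is a sum of finitely many terms, each of which tends to $0$ by the F\o lner condition; hence the right-hand side is eventually smaller than $\eps$, which gives $(K,\eps)$-invariance for all sufficiently large $n$.

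For the ``if'' direction I would specialize to singletons: given $g\in G$, apply the hypothesis with $K=\set{g}$, so that $KF_n=gF_n$, and conclude that for every $\eps>0$ the ratio $\abs{gF_n\sd F_n}/\abs{F_n}$ is eventually below $\eps$. Since $\eps>0$ is arbitrary, this says precisely that the limit of this ratio is zero, which is the defining F\o lner condition for $g$.

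I do not anticipate a genuine obstacle here; the only mildly delicate point is the elementwise justification of the displayed inclusion, and in particular the remark that a point of $F_n$ absent from the whole union $KF_n$ is already absent from the single translate $g_1F_n$, so that one fixed index suffices to absorb the $F_n\setminus KF_n$ part of the symmetric difference. Everything else is a finite sum of quantities that vanish in the limit.
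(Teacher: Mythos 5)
Your proof is correct and is the standard argument; the paper itself omits the proof of this lemma (deferring to [DHZ]), but your two directions --- the inclusion $KF_n\sd F_n\subseteq\bigcup_{i}(g_iF_n\sd F_n)$ giving a finite sum of vanishing ratios for the ``only if'' part, and specialization to singletons $K=\set{g}$ for the ``if'' part --- are exactly what is intended. The element-chasing justification of the inclusion (including the remark that $F_n\setminus KF_n$ is already contained in $F_n\setminus g_1F_n$ for a single fixed index) is sound, given that $K$ is nonempty as required by the paper's definition of $(K,\eps)$-invariance.
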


\begin{lem}\label{ben}
Let $K\subset G$ be a finite set and fix some $\eps>0$. There exists $\delta>0$ such that if
$T\subset G$ is $(K,\delta)$-invariant and $T'$ satisfies $\frac{|T'\triangle T|}{|T|}\le \delta$ then
$T'$ is $(K,\eps)$-invariant.
\end{lem}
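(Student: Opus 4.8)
The plan is to control how much the quantity $|KT\sd T|$ can change when $T$ is replaced by a nearby set $T'$, reducing everything to two elementary observations about symmetric differences. Throughout I will use that $\frac{|T'\sd T|}{|T|}\le\delta$ forces $|T'|\ge(1-\delta)|T|$, so that denominators computed with $|T|$ and with $|T'|$ are comparable up to the factor $1-\delta$.

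The first key step is the inclusion
\[
KT'\sd KT\subseteq K\!\left(T'\sd T\right).
\]
To see this, I would argue that any $g\in KT'\setminus KT$ can be written $g=kt'$ with $k\in K$, $t'\in T'$, and that $t'\in T$ would force $g\in KT$; hence $t'\in T'\setminus T$ and $g\in K(T'\setminus T)$, with the symmetric argument handling $KT\setminus KT'$. Taking cardinalities gives $|KT'\sd KT|\le|K|\cdot|T'\sd T|\le|K|\,\delta\,|T|$.

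The second step is the triangle-type inequality for symmetric differences: for any sets one has $A\sd B\subseteq(A\sd C)\cup(C\sd D)\cup(D\sd B)$. Applying this with $A=KT'$, $C=KT$, $D=T$, $B=T'$ yields
\[
KT'\sd T'\subseteq\left(KT'\sd KT\right)\cup\left(KT\sd T\right)\cup\left(T\sd T'\right).
\]
Bounding the three pieces by $|K|\delta|T|$ (from the first step), by $\delta|T|$ (from the hypothesis that $T$ is $(K,\delta)$-invariant), and by $\delta|T|$ (from $\frac{|T'\sd T|}{|T|}\le\delta$) respectively, I obtain $|KT'\sd T'|\le(|K|+2)\,\delta\,|T|$.

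Combining this with $|T'|\ge(1-\delta)|T|$ gives
\[
\frac{|KT'\sd T'|}{|T'|}\le\frac{(|K|+2)\,\delta}{1-\delta},
\]
and the right-hand side tends to $0$ as $\delta\to0$, so choosing $\delta<\frac{\eps}{|K|+2+\eps}$ (or any $\delta$ small enough to make the fraction less than $\eps$) completes the proof. There is no substantial obstacle here; the only point requiring care is the set-theoretic inclusion $KT'\sd KT\subseteq K(T'\sd T)$, and the fact that $\delta$ may be chosen depending only on $K$ and $\eps$ and not on $T$, which is transparent from the final estimate since $|K|$ is the only data entering the bound.
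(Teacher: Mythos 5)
Your argument is correct and complete. Note that the paper itself does not prove this lemma --- it explicitly skips the proof and refers the reader to [DHZ] --- so there is no in-text argument to compare against; your write-up is exactly the standard one. The two ingredients you isolate, namely the inclusion $KT'\sd KT\subseteq K(T'\sd T)$ and the triangle inequality $A\sd B\subseteq(A\sd C)\cup(C\sd D)\cup(D\sd B)$, are both verified correctly, the passage from the numerator bound $(|K|+2)\delta|T|$ to the normalized quantity via $|T'|\ge(1-\delta)|T|$ is sound, and your final choice $\delta<\frac{\eps}{|K|+2+\eps}$ indeed makes $\frac{(|K|+2)\delta}{1-\delta}<\eps$, with $\delta$ depending only on $|K|$ and $\eps$ as required. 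The only cosmetic remark is that the paper's definition of $(K,\eps)$-invariance uses a strict inequality, but since your final bound is strictly below $\eps$ for the chosen $\delta$, this is automatically satisfied.
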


\begin{defn}
We say that $T'\subset T$ ($T$ finite) is a $(1-\eps)$-subset of $T$ if $\abs{T'}\geq(1-\eps)\abs{T}$.
\end{defn}


%
%

Like in \cite{DHZ}, we will use the following definition of lower Banach density:

\begin{defn}
For $S\subset G$ and a finite, nonempty $F\subset G$ denote
\[
\underline D_F(S)=\inf_{g\in G} \frac{|S\cap Fg|}{|F|}. 
\]
If $(F_n)$ is a F\o lner sequence then define
\[
\underline D(S)=\limsup_{n\to\infty} \underline D_{F_n}(S),
\]
which we call the \emph{lower Banach density} of $S$.
\end{defn}
The proof of the following standard fact can again be found in \cite{DHZ}:
\begin{lem}\label{bd}
Regardless of the set $S$, the value of $\underline D(S)$ does not depend on the
F\o lner sequence, the limit superior in the definition is fact a limit, and moreover
\[\underline D(S) = \sup\{\underline D_F(S): F\subset G, F \text{ is finite}\}\]
\end{lem}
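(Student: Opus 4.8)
The statement to be proved is Lemma~\ref{bd}, which asserts three things about the lower Banach density $\underline D(S)$: that it is independent of the chosen F\o lner sequence, that the $\limsup$ in its definition is actually a limit, and that it equals the supremum of $\underline D_F(S)$ over all finite $F$. The plan is to establish the third (the supremum formula) as the central identity, since it manifestly does not reference any F\o lner sequence, and then derive the other two claims from it.

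First I would prove the inequality $\sup_F \underline D_F(S) \geq \limsup_n \underline D_{F_n}(S)$ for any F\o lner sequence $(F_n)$, which is immediate since each $\underline D_{F_n}(S)$ is one of the terms in the supremum, so every subsequential limit is bounded by the sup. The substantive direction is the reverse: I must show that for an arbitrary finite $F$, the quantity $\underline D_F(S)$ is bounded above by $\liminf_n \underline D_{F_n}(S)$ (note the $\liminf$, which is what simultaneously forces the $\limsup$ to be a genuine limit). The key quantitative tool here is a quasitiling/covering estimate: given the fixed finite set $F$ and a small $\eps>0$, any sufficiently invariant set $T$ (in particular a late F\o lner set $F_n$, by Lemma~\ref{folner}) can be almost covered by right-translates $Fg_1, \dots, Fg_m$ of $F$ that are \emph{almost disjoint}, with the overlaps and uncovered portion together occupying at most an $\eps$-fraction of $T$. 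Such a covering lemma is exactly of Ornstein--Weiss type and is presumably available from the cited \cite{DHZ}; I would invoke it rather than reprove it.

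Granting such an almost-disjoint cover of $F_n$ by translates of $F$, the counting runs as follows. On each translate $Fg_i$ we have $|S \cap Fg_i| \geq \underline D_F(S)\,|F|$ by the very definition of $\underline D_F$. Summing over the tiles and using that they are almost disjoint and cover all but an $\eps$-fraction of $F_n$, I obtain $|S \cap F_n| \geq (\underline D_F(S) - O(\eps))\,|F_n|$; the same bound applies with $F_n$ replaced by any translate $F_n g$, because invariance of $F_n$ is translation-invariant and the covering argument applies verbatim to $F_n g$. Dividing by $|F_n|$ and taking the infimum over $g$ yields $\underline D_{F_n}(S) \geq \underline D_F(S) - O(\eps)$ for all large $n$. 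Letting $n\to\infty$ and then $\eps\to 0$ gives $\liminf_n \underline D_{F_n}(S) \geq \underline D_F(S)$, and taking the supremum over $F$ gives $\liminf_n \underline D_{F_n}(S) \geq \sup_F \underline D_F(S)$.

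Combining the two directions, I get
\[
\sup_F \underline D_F(S) \geq \limsup_n \underline D_{F_n}(S) \geq \liminf_n \underline D_{F_n}(S) \geq \sup_F \underline D_F(S),
\]
so all four quantities coincide. This single chain delivers everything at once: the $\limsup$ equals the $\liminf$, so it is a true limit; the common value equals $\sup_F \underline D_F(S)$, which makes no reference to the sequence, so the density is independent of the F\o lner sequence; and the supremum formula is the asserted identity. The main obstacle is the almost-disjoint covering estimate in the second paragraph, i.e.\ controlling the overlap between translates of $F$ while still covering almost all of a sufficiently invariant $T$; this is the genuine combinatorial content, and I would lean on the Ornstein--Weiss covering machinery from \cite{DHZ} to supply it rather than constructing the cover by hand.
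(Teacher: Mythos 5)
The overall architecture of your argument is sound --- making the supremum identity the central claim and squeezing $\limsup$ and $\liminf$ between $\sup_F\underline D_F(S)$ is exactly the right skeleton, and the easy inequality $\sup_F\underline D_F(S)\ge\limsup_n\underline D_{F_n}(S)$ is fine. The gap is in your key step: the covering estimate you invoke does not exist in the form you need. For a \emph{single, arbitrary} finite shape $F$, the Ornstein--Weiss machinery only produces an $\eps$-disjoint family of right-translates of $F$ covering at least an $\eps$-fraction of a sufficiently invariant $T$ (this is precisely what the greedy construction with one shape yields in the paper's Lemma \ref{quasitilings}); reaching a $(1-\eps)$-covering requires $r$ shapes with $(1-\eps/2)^r<\eps$, each highly invariant with respect to the previous ones, and you have no freedom to choose $F$ because the supremum ranges over \emph{all} finite sets. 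Concretely, take $G=\Z$ and $F=\{0,1,3\}$: if $\{c,c+1,c+3\}$ belongs to a disjoint family of translates, then $c+2$ cannot be covered by any other translate disjoint from it, so every disjoint packing of a long interval by translates of $F$ misses at least one point per tile and covers at most about $3/4$ of the interval; $\eps$-disjointness only degrades this gracefully. With only a bounded-away-from-$1$ fraction covered, your counting gives something like $\underline D_{F_n}(S)\ge \tfrac34\,\underline D_F(S)$ at best, not $\underline D_F(S)-O(\eps)$.

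The inequality you actually need, $\underline D_{F_n}(S)\ge\underline D_F(S)-\delta$ for $(F,\delta)$-invariant $F_n$, follows from a double-counting argument that requires no covering at all: for any finite $T$,
\[
|T|\,|F|\,\underline D_F(S)\ \le\ \sum_{g\in T}|S\cap Fg|\ =\ \sum_{s\in S}\left|F^{-1}s\cap T\right|\ \le\ |F|\,|S\cap FT|,
\]
because $s\in Fg$ iff $g\in F^{-1}s$, each set $F^{-1}s\cap T$ has at most $|F|$ elements, and it is nonempty only when $s\in FT$. Hence $|S\cap FT|\ge\underline D_F(S)\,|T|$. Taking $T=F_nh$ and using $|S\cap FF_nh|\le|S\cap F_nh|+|FF_n\setminus F_n|<|S\cap F_nh|+\delta|F_n|$ gives $|S\cap F_nh|\ge(\underline D_F(S)-\delta)|F_n|$ uniformly in $h$, which is the bound your covering was meant to supply. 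This averaging argument is the one in \cite{DHZ}, to which the present paper defers for this lemma; once it is in place, the rest of your chain of inequalities goes through verbatim.
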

%

Let $X$ be a compact metric space and let $G$ be a discrete amenable group. We say that $G$ acts on $X$ by homeomorphisms, if for every $g\in G$ there exists a homeomorphism $h_g:X\mapsto X$ such that the mapping $h\mapsto h_g$ is a group isomorphism between $g$ and a subgroup of $\text{Homeo}(X)$. In a slight abuse of notation, $h_g(x)$ is often written just as $g(x)$, i.e. $G$ is identified with a subgroup of $\text{Homeo}(X)$. We say that the action of $G$ is free, if the identity $g(x)=x$ for some $x\in X,g\in G$ implies that $g=e$.

Let $\Lambda$ be a finite set with the discrete topology. There exists a standard action of $G$ on $\Lambda^G$ (called the \emph{shift} action), defined as follows: $(gx)(h)=x(hg)$. $\Lambda^G$ with the product topology and the shift action of $G$ becomes a zero-dimensional dynamical system, called the \emph{full shift over $\Lambda$}. A \emph{symbolic dynamical system over $\Lambda$} is any closed, $G$-invariant subset $X$ of the full shift.

\section{Quasitilings}

The first two definitions below are the same as in \cite{DHZ}
\begin{defn} A \emph{quasitiling} is determined by two objects:
\begin{enumerate}
	\item a finite collection $\CS(\CT)$ of finite subsets of $G$ containing the unit $e$,
	called \emph{the shapes}.
	\item a finite collection $\CC(\CT) = \{C(S):S\in\CS(\CT)\}$ of disjoint subsets of $G$, called \emph{center sets} (for the shapes).
\end{enumerate}
The quasitiling is then the family $\CT=\{(S,c):S\in\CS(\CT),c\in C(S)\}$. We require that the map $(S,c)\mapsto Sc$ be injective.\footnote{This requirement is stronger than asking that different tiles have different centers. Two tiles $Sc$ and $S'c'$ may be equal even though $c\neq c'$
(this is even possible when $S=S'$). However, when the tiles are disjoint, then the (stronger) requirement
follows automatically from the fact that the centers belong to the tiles.} Hence, by the \emph{tiles} of $\CT$ (denoted by the letter $T$) we will mean either the sets $Sc$ or the pairs $(S,c)$ (i.e., the tiles with defined centers), depending on the context. 
\end{defn}
Note that every quasitiling $\CT$ can be represented in a symbolic form, as a point $x_\CT\in\Lambda^G$, with the alphabet $\Lambda = \CS(\CT)\cup\{\emptyset\}$, as follows: $x_\CT(g)=S \iff g\in C(S)$, $\emptyset$ otherwise.

\begin{defn}\label{qt} Let $\eps\in[0,1)$ and $\alpha\in(0,1]$. A quasitiling $\CT$ is called
\begin{enumerate}
	\item \emph{$\eps$-disjoint} if there exists a mapping $T\mapsto T^\circ$ ($T\in\CT$) such that
	\begin{itemize}
	\item $T^\circ$ is a $(1-\eps)$-subset of $T$, and
	\item $T\neq T'\implies T^\circ\cap {T'}^\circ=\emptyset$;
	\end{itemize}
    \item \emph{disjoint} if the tiles of $\CT$ are pairwise disjoint;
	\item \emph{$\alpha$-covering} if $\underline D(\bigcup\CT)\ge\alpha$.
\end{enumerate}
\end{defn}

\begin{defn}
 If $X$ is a zero-dimensional compact metric space and $G$ is an amenable group acting on $X$ by homeomorphisms, then a \emph{dynamical quastiling} is a map $x\mapsto \CT_x$ which assigns to every $x\in X$ a quasitiling of $G$ such that the set of all shapes $\CS=\bigcup_{x\in X}\CS(\CT_x)$ is finite, and $x\mapsto \CT_x$ is a factor map from $X$ onto a symbolic dynamical system over the alphabet $\Lambda=\CS\cup\set{\emptyset}$. We say that a dynamical quastitiling is $\eps$-disjoint, disjoint or $\alpha$-covering, if $\CT_x$ has the respective property for every $x$.
\end{defn}

The following lemma is very similar to the one in \cite{DHZ} (which in turn largely uses the techniques developed in \cite[I.\S 2. Theorem 6]{OW}), with the major difference concerning the dynamical properties of the obtained tilings.

\begin{lem}\label{quasitilings} Let $X$ be a compact, zero-dimensional metric space and let $G$ be a countable amenable group acting freely on $X$ by homeomorphisms, with a F\o lner sequence $(F_n)$ of symmetric sets containing the unit. Given $\eps>0$, there exists a positive integer $r=r(\eps)$ such that for each positive integer $n_0$ there exists a dynamical quastitiling $x\mapsto \CT_x$ which is $\eps$-disjoint and $(1-\eps)$-covering, and the set of shapes $\bigcup_{x\in X}\CS(\CT_x)$ consists of $r$ shapes $\{F_{n_1},\dots,F_{n_r}\}$, where $n_0<n_1<\cdots<n_r$.
\end{lem}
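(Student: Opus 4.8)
The plan is to adapt the Ornstein–Weiss quasitiling construction so that the tile placement depends continuously (and shift-equivariantly) on the point $x\in X$. The classical argument proceeds by greedily placing disjoint (or $\eps$-disjoint) copies of large Følner sets, from the largest shape down to the smallest, until the uncovered portion of $G$ has small enough lower Banach density. The key input is a quantitative consequence of the Følner condition: if $F_{n_1}\subset\cdots\subset F_{n_r}$ grow fast enough, then any $\eps$-disjoint packing of copies of $F_{n_r}$ that cannot be extended already covers a definite proportion, and iterating over finitely many scales drives the uncovered density below $\eps$. First I would fix $\eps$, choose the gap between scales so that each $F_{n_{i+1}}$ is $(F_{n_i},\delta)$-invariant for a suitably small $\delta$ (invoking Lemma \ref{folner}), and record the combinatorial estimate that $r=r(\eps)$ scales suffice, exactly as in \cite{DHZ}; this part is purely algebraic and carries over verbatim.

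The genuinely new ingredient is making the center sets $C(F_{n_i})$ into a \emph{factor map}. The natural device is to encode, for each $x$, enough of the ,,local picture'' of the orbit to decide tile placement by a rule that only reads finitely much information. Since $X$ is zero-dimensional and $G$ acts freely, I would first replace $X$ by a refinement argument: for a large finite $K\subset G$ (depending on the largest shape $F_{n_r}$), the free action together with zero-dimensionality yields a finite clopen partition $\CP$ of $X$ whose atoms are ,,$K$-separating'', meaning the $K$-names $g\mapsto$ (atom of $gx$ for $g\in K$) distinguish points finely enough that the entire greedy procedure restricted to $K$ is determined by the $K$-name of $x$. Concretely, I would build a clopen set $B\subset X$ playing the role of a marker so that the centers are selected from the ,,return-time'' structure of $B$; the standard way to get such markers in a free zero-dimensional action is via a clopen Rokhlin-type tower or a maximal clopen $F_{n_r}$-separated set, which can be obtained by a greedy selection over a fine enough clopen partition.

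The main obstacle is reconciling the greedy, order-dependent nature of the Ornstein–Weiss packing with the requirement of \emph{continuity}: a naive greedy algorithm (,,scan $G$ in some order and place a tile whenever possible'') is not shift-equivariant, and small perturbations of $x$ could flip a placement decision arbitrarily far away, destroying both continuity and equivariance. I expect the resolution to be a \emph{simultaneous, threshold-based} selection rule rather than a sequential one: at each scale $i$, declare $g\in C(F_{n_i})$ according to a condition that depends only on the clopen $K$-name of $g^{-1}x$ (so that equivariance $(S,c)\in\CT_x\iff (S,gc)\in\CT_{g^{-1}x}$ holds by construction), and arrange the conditions across scales to be mutually consistent (disjointness of centers, $\eps$-disjointness of tiles). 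Verifying that such a local rule still achieves $(1-\eps)$-covering is where the Følner estimates from the first paragraph must be married to the locality: one shows that whenever the local rule refuses to place any tile near $g$, the neighbourhood of $g$ is already sufficiently covered, so the global uncovered density is controlled.

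Once the rule is local and equivariant, continuity and the factor property are automatic: the map $x\mapsto\CT_x$ factors through the finite-coordinate map $x\mapsto$ ($K$-name of $x$), hence is continuous into $\Lambda^G$ with $\Lambda=\{F_{n_1},\dots,F_{n_r}\}\cup\{\emptyset\}$, and equivariance makes it a factor map onto its (closed, shift-invariant) image. I would close by checking that the finitely many shapes are exactly $\{F_{n_1},\dots,F_{n_r}\}$ with $n_0<n_1<\cdots<n_r$, that each tile is (by Lemma \ref{ben}) still highly invariant, and that the covering bound transfers from $\underline D_F$ to $\underline D$ via Lemma \ref{bd}.
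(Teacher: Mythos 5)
Your plan follows the paper's route: the multi-scale Ornstein--Weiss packing with $r=r(\eps)$ shapes and the density bookkeeping are, as you say, taken essentially verbatim from \cite{DHZ}, and the genuinely new content is an equivariant, locally determined selection of centers built from clopen sets $U$ with $\{g(U):g\in F_{n_r}\}$ pairwise disjoint (which exist by freeness, zero-dimensionality and compactness). You correctly identify all of these ingredients, and your closing observations about continuity, equivariance and Lemma \ref{bd} match the paper.

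However, there is one concrete gap at the heart of the new part. You reject a sequential rule in favour of a ``simultaneous, threshold-based'' one, but a literally simultaneous rule at a fixed scale is not well defined: the threshold must be measured against the set of tiles \emph{already placed}, and if two candidate centers $g,g'$ at the same scale each see a local picture permitting a tile, a simultaneous rule places both and you have no control over their overlap, so $\eps$-disjointness fails. The paper's resolution is a \emph{finite} sequential pass, not over $G$ but over an ordered clopen cover $U_1,\dots,U_m$ of $X$ by $F_{n_j}$-separated sets: all $g$ with $gx\in U_1$ become centers unconditionally (their tiles are automatically pairwise disjoint because the sets $t(U_1)$, $t\in F_{n_j}$, are disjoint), and at stage $i$ one admits $g$ with $gx\in U_i$ only if $\abs{F_{n_j}g\cap(\text{previously covered})}<\eps\abs{F_{n_j}}$. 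This ordering is over a finite index set attached to the space $X$, not to $G$, so it is compatible with both equivariance ($C_i(hx)=C_i(x)h^{-1}$) and continuity; it is exactly the tie-breaking device your sketch is missing. You have all the pieces (you mention the separated clopen sets and the greedy selection over a clopen partition), but you need to make the same-scale conflict resolution explicit, since without it the $\eps$-disjointness claim does not follow.
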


\begin{proof}
Find $r$ such that $(1-\frac\eps2)^r<\eps$. This is going to be the cardinality of the family of shapes.
Choose integers $n_1=n_0+1, n_2,\dots, n_r$ so that they increase and for each pair of indices $j<i$, $j,i\in\{1,2,\dots, r\}$ the set $F_{n_i}$ is $(F_{n_j},\delta_j)$-invariant, where $\delta_j$ will be specified later. For every $x$, we let $\CS(\CT_x) = \{F_{n_j}: j=1,\dots,r\}$ be our family of shapes. With this choice, the assertions about the shapes and their number are fulfilled. It remains to construct the corresponding center sets $C_x(F_{n_j})$ for every $x$ so as to satisfy $\eps$-disjointness and $(1-\eps)$-covering of $\CT_x$, and to ensure that the mapping $x\mapsto \CT_x$ is a factor map (to which end $\CT_x$ must depend continuously on $X$, and for eveyry $g\in G$ we must have $\CT_{gx}=g(\CT_x)$).

We proceed by induction over $j$ decreasing from $r$ to 1. Begin with $j=r$. Since $G$ acts freely on $X$, every $x\in X$ has a clopen neighborhood $U_x$ such that the sets $g(U_x)$ are pairwise disjoint for $g\in F_{n_r}$. By compactness, we can choose a finite number of such neighborhoods whose union covers $X$. We will label these neighborhoods $U_1,U_2,\ldots,U_m$.
 
 Fix $x\in X$, and let 
 \[C^{(r)}_1(x)=\set{g\in G:gx\in U_1}.\]
 Then for $i=2,\ldots,m$ let
 \[C_i^{(r)}(x)=C_{i-1}^{(r)}(x)\cup\set{g\in G: gx\in U_i \text{, and } \abs{F_{n_r}g\cap F_{n_r}C^{(r)}_{i-1}(x)}<\eps\abs{F_{n_r}}}. \]
 We can show that for every $x\in X,h\in G$ and $i=1,\ldots,m$, $C_i^{(r)}(hx)=C^{(r)}_i(x)h^{-1}$. Indeed, for $i=1$ we have $g\in C^{(r)}_1(hx)$ if and only if $ghx\in U_1$, which is equivalent to stating that $gh\in C^{(r)}_1(x)$, i.e. $g\in C_1^{(r)}(x)h^{-1}$. Now assuming that $C^{(r)}_{i-1}(hx)=C^{(r)}_{i-1}(x)h$, we have the following equivalent statements:
 \[g\in C^{(r)}_i(hx)\]
 \[g\in C^{(r)}_{i-1}(x)h^{-1},\text{ }ghx\in U_i \text{ and }\abs{F_{n_r}g\cap F_{n_r}C^{(r)}_{i-1}(hx)}<\eps\abs{F_{n_r}}\]
 \[gh\in C^{(r)}_{i-1}(x),\text{ }ghx\in U_i \text{ and }\abs{F_{n_r}g\cap F_{n_r}C^{(r)}_{i-1}(x)h^{-1}}<\eps\abs{F_{n_r}}\]
 \[gh\in C^{(r)}_{i-1}(x),\text{ }ghx\in U_i \text{ and }\abs{F_{n_r}gh\cap F_{n_r}C^{(r)}_{i-1}(x)}<\eps\abs{F_{n_r}}\]
 \[gh\in C^{(r)}_i(x)\]
 \[g\in C^{(r)}_i(x)h^{-1}\]
 Also note that since the set $C^{(r)}_i(x)$ is determined by the visits of $x$ in clopen sets of $X$ under the action of $G$, for every $F\in G$ there exists an $\eta$ such that if $d(x,y)<\eta$, then $C^{(r)}_i(x)\cap F=C^{(r)}_i(y)\cap F$. Now, let $C^{(r)}(x)=C^{(r)}_m(x)$ and let $\CT^{(r)}_x$ be a point in $\{\emptyset, F_{n_1},\dots,F_{n_r}\}^G$ such that $\CT^{(r)}_x)_g=F_{n_r}$ if $g\in C^{(r)}(x)$ and $\emptyset$ otherwise. By the remarks made earlier, the mapping $x\mapsto \CT^{(r)}_x$ is continuous, and also $(\CT^{(r)}_{hx})_g=F_{n_r}$ if and only if $g\in C^{(r)}(hx)$, if and only if $g\in C^{(r)}(x)h^{-1}$ if and only if $gh\in C^{(r)}(x)$, if and only if $(\CT^{(r)}_x)_{gh}=F_{n_r}$, which means that $\CT^{(r)}_{hx}=h(\CT^{(r)}_x)$, and thus the mapping $x\mapsto \CT^{(r)}_x$ commutes with the shift action.
 
 We will now show that for every $x\in X$, the family $\set{F_{n_r}c:c\in C^{(r)}(x)}$ is an $\eps$-covering quasitiling of $G$ which is $\eps$-disjoint.
 
 As for the $\eps$-disjointness, note that for every $i$ and every $c\in C^{(r)}_i(x)$ the set $F_{n_r}c\setminus F_{n_r}C^{(r)}_{i-1}(x)$ is a $(1-\eps)$ subset of $F_{n_r}c$. Such subsets of $F_{n_r}c_1$ and $F_{n_r}c_2$ are pairwise disjoint by definition if $c_1$ and $c_2$ belong to $C^{(r)}_i(x)$ for different $i$'s, and if they both belong to the same $C^{(r)}_i(x)$ then if $F_{n_r}c_1\cap F_{n_r}c_2$ is nonempty, then it has an element of the form $t_1c_1=t_2c_2$ for $t_1,t_2\in F_{n_r}$ and $c_1,c_2\in C^{(r)}_i(x)$. Thus $t_1c_1(x)=t_2c_2(x)$, but as $c_1(x)$ and $c_2(x)$ are both in $U_i$, this implies that the images of $U_i$ under $t_1$ and $t_2$ are not disjoint, which contradicts the definition of $U_i$.
 
 Finally, for every $x\in X$ and every $g\in G$ there exists an $i$ such that $gx\in U_i$. Now either $g\in C^{(r)}_i(x)$, or $\abs{F_{n_r}g\cap F_{n_r}C^{(r)}(x)}\geq \abs{F_{n_r}g\cap F_{n_r}C^{(r)}_{i-1}(x)}\geq \eps\abs{F_{n_r}}$ --- in both cases, $F_{n_r}g\cap F_{n_r}C^{(r)}(x)\geq \eps\abs{F_{n_r}}$, which means that the lower Banach density of $F_{n_r}C^{(r)}(x)$ is at least $\eps$.

Fix some $j\in\{1,2,\dots,r-1\}$ and suppose that for every $x\in X$ we have constructed an $\eps$-disjoint quasitiling $\CT_x^{(j+1)}=\{F_{n_i}c:j+1\le i\le r,\ c\in C^{(j)}(x)\}$, such that for every $x$ the union
$$
H^{(j+1)}(x) = \bigcup_{i=j+1}^r\ F_{n_i}C^{(i)}(x)
$$
has lower Banach density strictly larger than $1-(1-\frac\eps2)^{r-j}$ (this is our inductive hypothesis on $H^{(j+1)}(x)$ and it is fulfilled for $H^{(r)}(x)$), and that $x\mapsto \CT^{(j+1)}_x$ is a factor map. We need to go one step further in our ``decreasing induction'', i.e., add a center set $C^{(j)}(x)$ for the shape $F_{n_j}$.

As before, we can cover $X$ by a finite family of clopen sets $U_1,U_2,\ldots,U_m$, such that for every $i$ the sets $g(U_i)$ are pairwise disjoint for $g\in F_{n_j}$. Let $C^{(j)}_0(x)=\emptyset$, and for $i=1,\ldots,m$, let

\[\begin{split}
&C_i^{(j)}(x)=C_{i-1}^{(j)}(x)\cup\\
&\cup\set{g\in G: gx\in U_i \text{, and } \abs{F_{n_j}g\cap \left(F_{n_j}C^{(j)}_{i-1}(x)\cup \bigcup_{l={j+1}}^kF_{n_l}C^{(l)}(x)\right)}<\eps\abs{F_{n_j}}}.
\end{split}\]

Let $C^{(j)}(x)=C^{(j)}_m(x)$. By the same arguments as before, we easily obtain the following properties of this set:
\begin{itemize}
    \item For any $x\in X$ and $h\in G$ we have  $C^{(j)}(hx)=C^{(j)}(x)h^{-1}$.
    \item The mapping $x\mapsto \CT^{(j)}_x$ is a factor map.
    \item For every $x\in X$, the quasitiling $\CT^{(j)}x$ is  $\eps$-disjoint.
    \item For every $x$, if we denote by $H^{(j)}(x)$ the union of the above family, then for every $g\in G$ we have 
    \begin{equation}\label{eq:Hjdensity}\frac{\abs{H^{(j)}(x)\cap F_{n_j}g}}{\abs{F_{n_j}}}>\eps.\end{equation}
\end{itemize}
The rest of the proof is nearly identical as the analogous proof in \cite{DHZ} for ``static'' quasitilings. We copy it here for completeness, adapting it to our dynamical situation. Since the arguments below involve only algebraic operations and combinatorics (and not the dynamics and topology on $X$), and are the same for every $x$, for the next few paragraphs we will omit references to $x$ and write just $H^{(j)}$ rather than $H^{(j)}(x)$ and $C^{(j)}$ rather than $C^{(j)}(x)$, implicitly stating that the estimates provided are true for every $x$.

Our goal is to estimate from below the lower Banach density of $H^{(j)}$. By Lemma \ref{bd}, it suffices to estimate $\underline D_F(H^{(j)})$ for just one finite set $F$ which we will define in a moment. Define $B=\Bigl(\bigcup_{i=j+1}^rF^2_{n_i}\Bigr)F_{n_j}$. Clearly, $B$ contains $F_{n_j}$
(hence the unit), and, as easily verified, it has the following property:

\begin{equation}\begin{split}
&\text{    Whenever $F_{n_j}F_{n_i}c\cap A\neq\emptyset$, for some
    $i\in\{j+1,\dots,r\}$,}\\&\text{ $c\in G$ and $A\subset G$, then $F_{n_i}c\subset BA$.}
    \end{split}
\end{equation}

Let $n$ be so large that $F_n$ is $(B,\delta_j)$-invariant and that $\underline D_{F_n}(H^{(j+1)})>1-(1-\frac\eps2)^{r-j}$ (the latter is possible due to the assumption on $\underline D(H^{(j+1)})$). Now we define the aforementioned set $F$ as $F=F_{n_j}F_n$.

Fix some $g\in G$ and define
$$
\alpha_g = \frac{|H^{(j+1)}\cap F_ng|}{|F_n|}  \text{ \ \ and \ \ } \beta_g = \frac{|H^{(j+1)}\cap BF_ng|}{|BF_n|}.
$$
Notice that
\begin{equation}\label{minus}
\alpha_g \ge \underline D_{F_n}(H^{(j+1)}) > 1-(1-\tfrac\eps2)^{r-j}.
\end{equation}
Also, we have
\begin{align}
\beta_g &\ge \frac{|H^{(j+1)}\cap F_ng|}{(1+\delta_j)|F_n|}=\frac{\alpha_g}{1+\delta_j}\label{zero} \ ,   \text{ \ \ and }\\
\beta_g &\le \frac{|H^{(j+1)}\cap F_ng|+|BF_ng\setminus F_ng|}{|F_n|}\le \alpha_g + \delta_j.
\end{align}
Note that since $F_{n_j}\subset B$ and $F_n$ is $(B,\delta_j)$-invariant, $F_n$ is automatically
$(F_{n_j},\delta_j)$-invariant. Thus
\begin{equation}\label{jeden}
\frac{|H^{(j+1)}\cap Fg|}{|F|}\ge \frac{|H^{(j+1)}\cap F_ng|}{(1+\delta_j)|F_n|}= \frac{\alpha_g}{1+\delta_j}\ge\frac{\beta_g-\delta_j}{1+\delta_j}.
\end{equation}

Consider only these finitely many component sets $F_{n_i}c$ of $H^{(j+1)}$ (i.e., with $i\in\{j+1,\dots,r\},\ c\in C^{(i)}$) for which $F_{n_j}F_{n_i}c$ has a nonempty intersection with $F_ng$, and denote by $E_g$ the union of so selected components $F_{n_i}c$. By the property (*) of $B$ (with $A=F_ng$), $E_g$ is a subset of $BF_ng$ (and also of $H^{(j+1)}$), so
\begin{equation}\label{niewiem}
|E_g|\le |H^{(j+1)}\cap BF_ng|= \beta_g|BF_n|\le\beta_g(1+\delta_j)|F_n|.
\end{equation}
Each of the selected components $F_{n_i}c\subset E_g$ is $(F_{n_j},\delta_j)$-invariant, hence, when multiplied on the left by $F_{n_j}$ it can gain at most $\delta_j|F_{n_i}c|$ new elements. Thus the set $E_g$, when multiplied on the left by $F_{n_j}$, can gain at most $\delta_j\sum_{F_{n_i}\!c\subset E_g}|F_{n_i}c|$ new elements. On the other hand, denoting by $(F_{n_i}c)^\circ$ the pairwise disjoint sets (contained in respective sets $F_{n_i}c$) as in the definition of $\eps$-disjointness, we also have
$$
\sum_{F_{n_i}\!c\subset E_g}|F_{n_i}c|\le \frac1{1-\eps}\sum_{F_{n_i}\!c\subset E_g}|(F_{n_i}c)^\circ| = \frac1{1-\eps}\Bigl|\bigcup_{F_{n_i}\!c\subset E_g}(F_{n_i}c)^\circ\Bigr|\le \frac1{1-\eps}|E_g|.
$$
Combining this with the preceding statement, we obtain that the set $E_g$, when multiplied on the left by $F_{n_j}$, can gain at most $\frac{\delta_j}{1-\eps}|E_g|$ new elements, which is less than $2\delta_j|E_g|$ (we can assume that $\eps<\frac12$).
Denote $\hat{H}^{(j+1)} = F_{n_j}H^{(j+1)}$. By the choice of the components included in $E_g$, the set $F_{n_j}E_g$ contains all of $\hat{H}^{(j+1)}\cap F_ng$. Thus, using $(1+2\delta_j)\le (1+\delta_j)^2$ and \eqref{niewiem}, we obtain that
\begin{equation*}
|\hat{H}^{(j+1)}\cap F_ng|\le |F_{n_j}E_g| \le (1+2\delta_j)|E_g| \le (1+\delta_j)^3\beta_g|F_n|.
\end{equation*}
Let $N_g = F_ng\setminus \hat{H}^{(j+1)}$. By the above inequality, we know that
\begin{equation}\label{dwa}
|N_g|\ge \bigl(1-(1+\delta_j)^3\beta_g\bigr)|F_n|\ge \bigl(1-(1+\delta_j)^3\beta_g\bigr)\tfrac{|F|}{1+\delta_j},
\end{equation}
where the last inequality follows from the $(F_{n_j},\delta_j)$-invariance of $F_n$.

Earlier we have established (inequality (\ref{eq:Hjdensity})) that $\frac{\abs{H^{(j)}\cap F_{n_j}c}}{\abs{F_{n_j}}}\geq\eps$ for every $c \in G$, in particular for every $c \in N_g$. This implies that there are at least $\eps|N_g||F_{n_j}|$ pairs $(f,c)$ with $f\in F_{n_j}, c\in N_g$ such that $fc\in H^{(j)}$. This in turn implies that there exists at least one $f\in F_{n_j}$ for which
\begin{equation}\label{trzy}
|H^{(j)}\cap fN_g|\ge \eps|N_g|.
\end{equation}

Notice that $fN_g$ is contained in $Fg$ (because $N_g\subset F_ng$ and $f\in F_{n_j}$) and disjoint from $H^{(j+1)}$ ($N_g$ is disjoint from $\hat{H}^{(j+1)}$ which contains $f^{-1}H^{(j+1)}$). Thus we can estimate, using \eqref{jeden}, \eqref{dwa} and \eqref{trzy}:
\begin{eqnarray*}
\frac{|H^{(j)}\cap Fg|}{|F|}&\ge & \frac{|H^{(j+1)}\cap Fg| + |H^{(j)}\cap fN_g|}{|F|}\\
&= & \frac{|H^{(j+1)}\cap Fg|}{|F|}  + \frac{|H^{(j)}\cap fN_g|}{|N_g|}\frac{|N_g|}{|F|}\\
&\ge & \frac{\beta_g-\delta_j}{1+\delta_j} + \eps\frac{1-(1+\delta_j)^3\beta_g}{1+\delta_j}.
\end{eqnarray*}


Both terms in the last expression are linear functions of $\beta_g$, the first one
with positive and large slope $\frac1{1+\delta_j}$, the other with negative but small slope $-\eps(1+\delta_j)^2$. Jointly,
the function increases with $\beta_g$. So, we can replace $\beta_g$ by any smaller value, for instance, by $\frac{1-(1-\frac\eps2)^{r-j}}{1+\delta_j}$ (see \eqref{minus} and \eqref{zero}), to obtain
$$
\frac{|H^{(j)}\cap Fg|}{|F|}> \frac{1-(1-\tfrac\eps2)^{r-j}}{(1+\delta_j)^2}-\frac{\delta_j}{1+\delta_j} + \eps\bigl(\tfrac1{1+\delta_j}-(1+\delta_j)(1-(1-\tfrac\eps2)^{r-j})\bigr).
$$
Now notice, that if we replace the undivided occurrence of $\eps$ by $\frac{3\eps}4$, we make the entire expression smaller by
some positive value (independent of $g$). On the other hand, if $\delta_j$ is very small and we remove it completely from the
expression, we will perhaps enlarge it, but very little. We now specify $\delta_j$ to be so small, that if we replace $\eps$ by $\frac{3\eps}4$ and remove $\delta_j$ completely, then the expression will become smaller. With such a choice of $\delta_j$ we have
$$
\frac{|H^{(j)}\cap Fg|}{|F|}> 1-(1-\tfrac\eps2)^{r-j} +\frac{3\eps}4(1-\tfrac\eps2)^{r-j} = 1-(1-\tfrac\eps2)^{r-j+1} + \xi,
$$
where $\xi>0$ does not depend on $g$. Taking infimum over all $g\in G$ we get, by Lemma \ref{bd},
$$
\underline D(H^{(j)})\ge \underline D_F(H^{(j)}) > 1-(1-\tfrac\eps2)^{r-j+1},
$$
and the inductive hypothesis has been derived for $j$.

Once the induction reaches $j=1$ we get that the lower Banach density of $H^{(1)}(x)$ is larger than $1-(1-\tfrac\eps2)^r$ which,
by the choice of $r$, is larger than $1-\eps$ and means that $\CT_x=\CT^{(1)}(x)$ is the desired quasitiling.
\end{proof}

At the cost of increasing the number of possible shapes (but without sacrificing the other properties) we can make our quasitilings disjoint:

\begin{cor}\label{disjoint_tilings}
Let $X$ be a compact, zero-dimensional metric space and let $G$ be a countable amenable group acting freely on $X$ by homeomorphisms, with a F\o lner sequence $(F_n)$ of symmetric sets containing the unit. Given $\eps>0$ and any positive integer $n_0$, there exists a dynamical quastitiling $x\mapsto \CT_x$ which is \emph{disjoint}, and $(1-\eps)$-covering, and such that every shape $S$ of every $\CT_x$ is a $(1-\eps)$-subset of some F\o lner set $F_{n(S)}$ where $n(S)>n_0$.
\end{cor}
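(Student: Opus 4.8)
The plan is to derive the disjoint tiling from the $\eps$-disjoint one produced by Lemma \ref{quasitilings}, by replacing every tile with the disjoint ``core'' that the construction already isolates, and then re-reading this family as a genuine quasitiling with (more, but still finitely many) new shapes.

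First I would apply Lemma \ref{quasitilings} with the given $\eps$ and $n_0$, obtaining for every $x$ an $\eps$-disjoint, $(1-\eps)$-covering dynamical quasitiling $\CT_x$ with shapes $F_{n_1},\dots,F_{n_r}$ (where $n_0<n_1<\cdots<n_r$), built by the displayed greedy procedure. For a tile $T=F_{n_j}c$ processed at stage $(j,i)$, the construction already singles out the set $T^\circ=F_{n_j}c\setminus\bigl(F_{n_j}C^{(j)}_{i-1}(x)\cup\bigcup_{l>j}F_{n_l}C^{(l)}(x)\bigr)$, i.e. the part of $T$ not met by any previously placed tile; by the very proof of $\eps$-disjointness these cores are pairwise disjoint and each is a $(1-\eps)$-subset of $F_{n_j}c$.

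Define $\tilde\CT_x$ to be the family of these cores. Disjointness is then immediate. For covering, I would observe that every point of $\bigcup\CT_x$ lies in the core of the earliest-placed (largest-shape, earliest-clopen-stage) tile containing it, so that $\bigcup\tilde\CT_x=\bigcup\CT_x=H^{(1)}(x)$, whose lower Banach density is already $\geq 1-\eps$ by the Lemma; hence $\tilde\CT_x$ is $(1-\eps)$-covering. Each core sits inside $F_{n_j}c$ and has at least $(1-\eps)|F_{n_j}|$ elements, so after translating it to be centred at $c$ it becomes a $(1-\eps)$-subset $S^\circ=T^\circ c^{-1}$ of the \Folner\ set $F_{n_j}$ with $n_j>n_0$ --- exactly the shape condition demanded. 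The number of distinct shapes $S^\circ$ that can occur is finite: only tiles $F_{n_l}c'$ with $c'c^{-1}\in F_{n_l}F_{n_j}$ can meet $F_{n_j}c$, so $S^\circ$ is determined by the finite pattern, over the finite set $\bigcup_l F_{n_l}F_{n_j}$, recording which relative positions carry earlier centres of which shape. This is the source of the increase in the number of shapes, while all shapes remain $(1-\eps)$-subsets of the sets $F_{n_j}$. Finally, the core map is a sliding-block code: it reads $x$ only through the centres lying in a fixed finite window, which by the Lemma depend continuously on $x$ and commute with the action, so $x\mapsto\tilde\CT_x$ is again a factor map.

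The main obstacle is that the core $T^\circ$ need not contain its own centre $c$: the greedy rule accepts $c$ whenever the \emph{overlap} $|F_{n_j}c\cap(\text{earlier union})|$ is small, yet $c$ itself may lie on the boundary of an earlier tile, in which case $e\notin S^\circ$ and $S^\circ$ fails to be a legitimate shape. I expect to resolve this by strengthening the acceptance condition in the construction of Lemma \ref{quasitilings}, additionally requiring each accepted centre to be currently \emph{uncovered}, i.e. $c\notin F_{n_j}C^{(j)}_{i-1}(x)\cup\bigcup_{l>j}F_{n_l}C^{(l)}(x)$; then $c\in T^\circ$ automatically. The crucial point is that this change does not affect the density estimate: inequality \eqref{eq:Hjdensity} is only ever invoked for centres $c\in N_g$, which by construction already lie outside the earlier union $H^{(j+1)}$, and for such $c$ the new rejection reason never triggers, so the estimate --- and with it the entire inductive density bound together with the equality $\bigcup\tilde\CT_x=H^{(1)}(x)$ --- survives verbatim. (Alternatively, one could leave the Lemma untouched and re-centre each core at a canonically chosen interior point, but an equivariant choice of basepoint is delicate for groups with torsion, so protecting the centre already at the construction stage is cleaner.)
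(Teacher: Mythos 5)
Your proposal is correct and follows essentially the same route as the paper: apply Lemma \ref{quasitilings} and replace each tile $F_{n_j}c$ by its core $F_{n_j}c\setminus\bigl(F_{n_j}C^{(j)}_{i-1}(x)\cup\bigcup_{l>j}F_{n_l}C^{(l)}(x)\bigr)$, observing that the cores are pairwise disjoint $(1-\eps)$-subsets whose union is unchanged (so the covering density is preserved) and that the modification is a sliding-block code, hence still a factor map. Your closing observation --- that a core need not contain its own centre $c$, so the translated shape could fail to contain the unit as the definition of a quasitiling requires --- is a genuine subtlety that the paper's proof passes over in silence, and your fix (additionally rejecting candidate centres that are already covered) is sound, since inequality \eqref{eq:Hjdensity} is only ever invoked at points of $N_g$, which lie outside $H^{(j+1)}$, so the density induction is unaffected.
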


\begin{proof}
For every $x$, let $\hat{\CT}_x$ be the quasitiling delivered by Lemma \ref{quasitilings}, with $\eps$ and $n_0$. Recall that for every $x$, the set of shapes of $\CT_x$ is $\set{F_{n_1},\ldots,F_{n_r}}$, where $n_0<n_1<\ldots<n_r$. Furthermore, every tile of $\hat{\CT}_x$ has the form $\hat{T}=F_{n_j}c$ for some $j\in\set{1,\ldots,r}$ and $c\in C^{(j)}_i(x)$. Let $T=F_{n_j}c\setminus\left(F_{n_j}C^{(j)}_{i-1}(x)\cup \bigcup_{l={j+1}}^kF_{n_l}C^{(l)}(x)\right)$. By the definition of the sets $C^{(j)}_i(x)$ and $C^{(j)}(x)$, $T$ is a $(1-\eps)$-subset of $\hat T$. In addition, if $\hat{T}\neq \hat{T'}$, then $T$ and $T'$ are disjoint: If we represent $\hat{T}=F_{n_j}c$ and $\hat{T'}=F_{n_{j'}}c'$, then we have two possibilities: either $j=j'$, and $c$ and $c'$ belong to the same $C^{(j)}_i(x)$ --- in this case $\hat{T}$ and $\hat{T'}$ are disjoint, therefore so are $T$ and $T'$ as their respective subsets. Otherwise we can without loss of generality assume that either $j<j'$, or $j=j'$ and $i>i'$ --- in this case, since $T=F_{n_j}c\setminus\left(F_{n_j}C^{(j)}_{i-1}(x)\cup \bigcup_{l={j+1}}^kF_{n_l}C^{(l)}(x)\right)$, it is disjoint from $T'$, as $T'$ is a subset of $F_{n_{j'}}c'$ and thus is included in the set subtracted from $F_{n_j}c$. 

Let $\CT_x$ denote the quasitiling obtained from $\hat{\CT}_x$ by these modifications. Note that due to the properties of the sets $C^{(j)}_i(x)$, we still have $\CT_{gx}=g(\CT_x)$, and if we  set $F=\bigcup_{j=1}^{r}F_{n_j}$, then  if $\hat{\CT_x}$ agrees with $\hat{\CT_y}$ on a subset of $G$ of the form $FB$, then $\CT_x$ and $\CT_y$ agree on $B$. This means that $\CT_x$ depends on $x$ continuously, and thus the mapping $x\mapsto \CT_x$ is a factor map. In addition, for every $x$ the union of all the tiles of $\CT_x$ has not changed, so the new quasitiling is still $(1-\eps)$-covering.

%
\end{proof}

Finally, we can also obtain a quasitiling that is ``compatible'' with another quasitiling by smaller tiles:

\begin{lem}\label{congruent_tilings}
    Let $G$ be an amenable group acting freely on a zero-dimensional metric space $X$ and let $x\mapsto \CT_x$ be any disjoint dynamical quasitiling of $G$. For any $\eps>0$, any finite $K\subset G$ and any $\delta>0$ there exists a disjoint, 
    $(1-\eps)$-covering dynamical quasitiling $x\mapsto \CT'_x$ such that every shape of $\CT'_x$ is $(K,\delta)$-invariant, and every tile of $\CT_x$ is either a subset of some tile of $\CT'_x$ or is disjoint from all such tiles.
\end{lem}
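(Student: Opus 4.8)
The plan is to first manufacture a coarse tiling with very good invariance by invoking Corollary~\ref{disjoint_tilings}, and then to \emph{snap} it to the given fine tiling $\CT_x$ by reassigning each fine tile, as a whole, to the coarse tile that contains its center. Write $\CS_*=\bigcup_{S\in\CS(\CT)}S$ for the (finite) union of all shapes occurring in $\CT_x$, and let $K'=K\cup\CS_*\cup\CS_*^{-1}$. Fix a small $\delta'>0$ (to be pinned down last). By Lemma~\ref{folner} the \Folner\ sets $F_n$ are eventually $(K',\delta')$-invariant, and by Lemma~\ref{ben} a $(1-\eta)$-subset of a sufficiently invariant set is again $(K',\delta')$-invariant; hence, applying Corollary~\ref{disjoint_tilings} with a covering/subset error $\eta\le\frac\eps2$ and a large enough lower index $n_0$, we obtain a disjoint, $(1-\eta)$-covering dynamical quasitiling $x\mapsto\CU_x$ whose shapes are $(1-\eta)$-subsets of \Folner\ sets $F_{n(S)}$, $n(S)>n_0$, and hence are $(K',\delta')$-invariant.

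Then, for a coarse tile $U$ of $\CU_x$ (with center $c$), and writing $c_T$ for the center of a fine tile $T\in\CT_x$, define
$$
U' \;=\; \Bigl(U\setminus\bigcup\{T\in\CT_x : c_T\notin U\}\Bigr)\;\cup\;\bigcup\{T\in\CT_x : c_T\in U\},
$$
i.e. we keep the part of $U$ not met by any foreign fine tile (leaving the uncovered ``holes'' of $\CT_x$ inside $U$ untouched) and we adjoin, in full, every fine tile whose center lies in $U$; let $\CT'_x$ be the family of the sets $U'$. Compatibility is immediate: a fine tile $T$ with $c_T\in U$ is contained in $U'$ and, being disjoint from every other fine tile, is removed from every other coarse tile, hence disjoint from all the remaining members of $\CT'_x$; a fine tile whose center lies in no coarse tile is removed from every coarse tile it meets and adjoined to none, hence disjoint from all of $\CT'_x$. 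Disjointness of the $U'$ follows by the same case analysis, splitting a hypothetical common point according to whether it lies in a fine tile (its center selects at most one coarse tile) or in a hole (then it already lay in at most one original $U$).

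The crux is to check that the snapping perturbs each coarse tile only within a thin boundary layer, so that both invariance and covering survive. A deleted point $g\in U$ lies in a fine tile $T=S_Tc_T$ with $c_T\notin U$; then $c_T\in\CS_*^{-1}g\setminus U$, so $g$ lies in the inner $K'$-boundary of $U$, of cardinality at most a constant times $\delta'\abs{U}$ by $(K',\delta')$-invariance. The adjoined points $U'\setminus U$ lie in fine tiles centered in $U$ but protruding outward, hence in the outer $K'$-boundary of $U$, and are bounded the same way. Thus $\abs{U'\symdiff U}\le c_0\delta'\abs{U}$. Choosing $\delta'$ small enough that (via Lemma~\ref{ben}) a set within relative distance $c_0\delta'$ of a $(K',\delta')$-invariant set is $(K,\delta)$-invariant, every shape of $\CT'_x$ becomes $(K,\delta)$-invariant; and since the deleted set has, on every coarse tile, density at most $c_0\delta'$ relative to $U$, a small $\delta'$ keeps the covering above $(1-\eta)-c_0\delta'\ge 1-\eps$. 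I expect this boundary estimate, together with the correct ordering of the parameter choices ($\eps,\delta,K\rightsquigarrow\delta'\rightsquigarrow n_0,\eta$), to be the main technical point.

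Finally, the dynamical conclusions come essentially for free. Every operation defining $U'$ — membership of a center in a tile, unions and set differences of fine and coarse tiles — is equivariant, so $\CT'_{gx}=g(\CT'_x)$, and each coordinate of $\CT'_x$ is determined by $\CU_x$ and $\CT_x$ on a uniformly bounded window, so $x\mapsto\CT'_x$ is continuous, hence a factor map. Because each $U'$ is contained in a fixed bounded neighbourhood of $U$ (the fine shapes being uniformly bounded) and the local pattern of $\CT_x$ inside such a window takes only finitely many values, only finitely many shapes occur; we keep $c$ as the center of $U'$ (re-selecting, equivariantly, an element of $U'$ from the finite local pattern in the exceptional case $c\notin U'$), which makes each shape contain the unit and the center sets disjoint, so $\CT'_x$ is a genuine dynamical quasitiling with all the required properties.
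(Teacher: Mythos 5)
Your proposal is correct and follows essentially the same route as the paper: build a coarse disjoint dynamical quasitiling with strong invariance via Corollary~\ref{disjoint_tilings}, reassign each fine tile wholesale to the coarse tile containing its center, and control the resulting perturbation of each coarse tile by a boundary estimate (the paper phrases this via the sets $T''_U\subset T'\subset UT''$ and a tailored $\delta''$, you via a generic $K'$-boundary count, but these are the same estimate). The only substantive point to tighten is the final covering bound, where passing from ``each tile loses a $c_0\delta'$-fraction'' to a lower Banach density bound for the union needs the standard argument the paper cites (Lemma 3.4 of \cite{DHZ}) rather than a bare subtraction.
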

\begin{proof}
    First of all observe that there exist constants $\delta'$ and $\eta$ such that if $T$ is $(K,\delta')$-invariant then any set $T'$ such that $\abs{T'\sd T}<\eta\abs{T}$, is $(K,\delta)$-invariant. We can also assume that $(1-\frac{\eps}{2})(1-\eta)>1-\eps$. Let $U$ be the union of all shapes of $\CT_x$ over all $x\in X$; there exists a $\delta''$ such that if $T$ is $(U,\delta'')$-invariant, and we denote by $T_U$ the set $\set{t\in T: Ut\subset T}$, then $\abs{UT\setminus T_U}<\eta\abs{T}$. Once these parameters are set, lemma \ref{quasitilings} ensures the existence of a disjoint, dynamical, $(1-\frac{\eps}{2})$-covering quasitiling $x\mapsto \CT''x$ whose shapes are $(E,\delta'')$ invariant for all shapes $E$ of $\CT_x$. 
    

Every tile $T\in \CT_x$ has a unique representation in the form $Ec$, where $E$ is one of the finitely many shapes. If $c$ belongs to some tile $T''$ of $\hat{\CT''}_x$ (by disjointness of the quasitiling, there can be at most one such tile), let $\phi_x(T)$ be such a $T''$. Otherwise let  $\phi_x(T)=\emptyset$. This gives us a mapping from $\CT_x$ to $\CT''_x\cup \emptyset$, which by its construction is continuous and commutes with the dynamics. Now we can modify $\CT''_x$ as follows: If $T''\in \CT''_x$, add to $T''$ all the tiles $T$ of $\CT_x$ such that $T''=\phi_x(T)$, and remove from it all the $T$ of $\CT_x$ such that $T''\neq \phi_x(T)$. 

    Observe that if $T'$ denotes a tile obtained from $T''$ after all such modifications, then $T''_U\subset T'\subset UT''$. It follows that $\abs{T''\sd T'}<\eta\abs{T''}$, and thus $T'$ is $(K,\delta)$-invariant. Therefore the map $x\mapsto\CT'_x$ obtained as a result of these modification is a disjoint, dynamical quasitiling whose tiles are all $(K,\delta)$-invariant and every tile of $\CT_x$ is either a subset of some tile of $\CT'_x$ or disjoint from all such tiles. Finally, to estimate the lower Banach density of the union of all tiles of $\CT'_x$, observe that for every tile $T''$ of $\CT''_x$ there exists a tile $T'$ of $\CT'_x$ which contains a $(1-\eta)$ subset of $T''$. Since $\CT''_x$ is a $(1-\frac{\eps}{2})$-covering quasitiling, a straightforward argument (see e.g. lemma 3.4 of \cite{DHZ}) implies that the lower Banach density of the union of all tiles of $\CT'_x$ is at least $(1-\frac{\eps}{2})(1-\eta)>1-\eps$.
    
\end{proof}

As a final note, we remark that replacing our quasitilings with actual tilings (in which the union of tiles is all of $G$) remains an open problem. If the procedure of completing a disjoint quasitiling to a tiling used in \cite{DHZ} was applied individually to $\CT_x$ for every $x\in X$, there is no guarantee that such new tilings would still depend continuously on $x$, and it is currently unclear whether the technique can be suitably modified.



\begin{thebibliography}{}
%
%
%
%

%




\bibitem[DHZ]{DHZ}
T. Downarowicz, D. Huczek, G. Zhang, Tilings of Amenable Groups, preprint (arXiv: 1502.02413)

%
%
%

\bibitem[N]{N}
I.~Namioka, F\o lner's conditions for amenable semi-groups, Math. Scand.
  15, 18--28 (1964). MR 0180832 (31 \#5062)


\bibitem[OW]{OW}
Donald~S. Ornstein and Benjamin Weiss, Entropy and isomorphism theorems for actions of amenable groups, J. Analyse Math. 48, 1--141 (1987). MR 910005 (88j:28014)








\end{thebibliography}
\end{document}